\newcommand{\labeleq}[1]{\label{eq:#1}}
\newcommand{\refeq}[1]{\text{(\ref{eq:#1})}}
\newcommand{\ZZ}{{\mathbb Z}} 
\newtheorem{theorem}{Theorem}
\newtheorem{definition}{Definition}
\newtheorem{lemma}{Lemma} 
\newtheorem{remark}{Remark}
\newtheorem{proposition}{Proposition}
\newtheorem{example}{Example}
\begin{document}

\title{Frame spectral pairs and exponential bases \thanks{The research of the first author was supported partially by the NSF grant DMS-1720306, and the research of the second author was supported  by an PSC-CUNY award and William P. Kelly Research Fellowship Award.}}
	\author{Christina Frederick \and Azita Mayeli 
	}
%
%
%
%
%
	\maketitle
	\begin{abstract}
		Given a domain $\Omega\subset\Bbb R^d$ with positive and finite Lebesgue measure and a discrete set $\Lambda\subset \Bbb R^d$, we say that $(\Omega, \Lambda)$ is a {\it frame spectral pair} if the set of exponential functions $\mathcal E(\Lambda):=\{e^{2\pi i \lambda \cdot x}: \lambda\in \Lambda\}$ is a frame for $L^2(\Omega)$. 
		Special cases of frames include Riesz bases and orthogonal bases.
		In the finite setting 
		$\Bbb Z_N^d$, $d, N\geq 1$, a frame spectral pair can be similarly defined. 
		In this paper we show how to construct and obtain new classes of frame spectral pairs in $\Bbb R^d$ by 
		``adding" a frame spectral pair in $\Bbb R^{d}$  to a frame spectral pair in $\Bbb Z_N^d$. Our construction unifies the well-known examples of exponential frames for the union of cubes with equal volumes.  We also remark on the link between the spectral property of a domain and sampling theory.


	\end{abstract}

	\section{Introduction}
	\label{intro}
	Let $\Omega\subset \Bbb R^d$ be a set with positive and finite  Lebesgue measure, $0<
	|\Omega|<\infty$, and let 
	$\Lambda\subset \Bbb R^d$ be a discrete and countable set. Let $\mathcal E_\Omega (\Lambda)$ denote the set of exponentials, $\mathcal E_\Omega (\Lambda)=\{ e_\lambda: \Omega\to S^1\mid  e_\lambda(x) := e^{2\pi i x\cdot\lambda}\}$. 
	For simplicity, we will drop the subscript $\Omega$ in the sequel and   write $\mathcal E(\Lambda)$. 
	The set of  exponentials 
	$\mathcal E(\Lambda)$ is a frame for $L^2(\Omega)$ when 
	there exist finite  constants $c>0$ and $C>0$ such that for any $u\in L^2(\Omega)$ 
	\begin{align}\label{Frame_ineq_1}
		c \|u\|_{L^{2}(\Omega)}^2 \leq \sum_{\lambda\in \Lambda} |\langle u, e_\lambda\rangle_{L^{2}(\Omega)}|^2 \leq C \|u\|^{2}_{L^{2}(\Omega)}.
	\end{align} 
	The constants $c$ and $C$ are  called {\it lower} and {\it upper frame constants}
	(see the classical references \cite{Ole-book,PeteCaza_art_frame_2000} for the basic properties of frames).   The frame is called {\it tight} if $C=c$. 
	The set of exponentials $\mathcal E(\Lambda)$ is a {\it Riesz sequence} in $L^2(\Omega)$, 
	if there are positive constants $c>0$ and $C>0$ such that for any finite set of scalars 
	$\{c_\lambda\}_{\lambda\in F}$ ($F\subset \Lambda$ finite), the following inequalities hold: 
	\begin{align}\label{RieszB}
		c \sum_{\lambda\in F} |c_\lambda|^2 \leq \left\|\sum_{\lambda\in F} c_\lambda e_\lambda\right\|_{L^{2}(\Omega)}^2 \leq C \sum_{\lambda\in F} |c_\lambda|^2 .
	\end{align} 
	When  only the right inequality in (\ref{RieszB}) holds, then $\mathcal E(\Lambda)$ is called a {\it Bessel sequence}.  A Riesz sequence is called a {\it Riesz basis} if it is complete. Any Riesz basis is a frame with unified frame constants, while the converse is not always true; see e.g. \cite{Ole-book}.
	
	\begin{definition} Given a set $\Omega\subset \Bbb R^d$ and discrete and countable set $\Lambda\subset \Bbb R^d$,  we term 
		$(\Omega, \Lambda)$ a {\it \bf frame spectral pair} when the system of
		exponentials 
		$\mathcal E(\Lambda)$ is a frame for $L^2(\Omega)$.
	\end{definition}
	
	For a given frame pair, when the frame is a Riesz basis, we shall call the pair a {\it Riesz spectral pair}. When it is an orthogonal basis, we shall simply call the pair a {\it spectral pair.}   An interesting question in the frame theory is the following: 
	
	\noindent {\bf Question A.} How can a given frame spectral be {\it modified} to become a Riesz basis or an orthogonal basis? 
	This question is more general, and in its light 
	in this paper we provide an answer to the following question:

	
	\noindent {\bf Question B.} 	 How can current frame spectral pairs, Riesz spectral pairs, or orthogonal bases be  {\it combined}  to form new ones?

	The study of frame spectral pairs in this paper is motivated by the interpolation formula for bandlimited signals with structured spectra and the link between spectral properties of a domain to its sampling properties. 
	

	\subsection{Main contributions} To address Question B., 
	the simplest and perhaps the most natural way of creating a new frame spectral pair in $\Bbb R^d$ is by appropriately {\it ``adding"}  two frame spectral pairs in both finite and continuous settings.  
	In this paper,  we will investigate whether  for a given pair  $(\Omega, \Lambda)$ in $\Bbb R^d$,   the pair  
	$(\Omega+A, \Lambda+J/N)$, $A, J\subset \Bbb Z_N^d$, will inherit 
	the analytical property  of the pair  $(\Omega, \Lambda)$.

	Our main findings are as follows.

	\begin{theorem}\label{examples of Frame spectral pairs} Suppose that  
		$\Omega_1\subset \Bbb R^d$ is a set with positive and finite Lebesgue measure, $\Lambda_1\subset \Bbb R^d$ is a discrete and countable set, and $N\geq 1$ is an integer. Let $A$ be any subset of $\Bbb Z_N^d$ for which the translates of $\Omega_{1}$ by $A$ are disjoint, that is,
		\begin{align}\label{disjointness}
			|\Omega_1+a \cap \Omega_1+a'|=0, \  \forall a,a'\in A, \  a\neq a'.
		\end{align}
		Let  $J\subset \Bbb Z^{d}_{N}$, and define 
		\begin{equation}\label{addition}
			\Omega:= \Omega_1+A, \quad \Lambda:=\Lambda_1 +J/N, 
		\end{equation} 
		where the sum is taken to be Minkowski addition.
		If $(\Omega_1, \Lambda_{1})$ is a frame spectral pair in $\Bbb R^d$ with frame constants $\alpha, \beta$ and $(A, J)$ is a frame spectral pair in $\Bbb Z_N^d$ with frame constants $c, C$, then $(\Omega, \Lambda)$ is also a frame spectral pair  in $\Bbb R^d$ with frame constants $\alpha c, \beta C$ if   for all $\lambda\in \Lambda_1$ we have 
		\begin{align}\label{suff-cond} 
			\hat\delta_{\lambda}\equiv 1 \  \text{on}  \ A,
		\end{align}
		where $\delta_\lambda$    is the Kronecker delta function at $\lambda$ (\ref{Dirac Delta with mass point}).  
		%
		%
		%
		%
	\end{theorem}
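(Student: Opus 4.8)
The plan is to reduce the two frame inequalities for $(\Omega,\Lambda)$ directly to the two given frame inequalities, by expanding the analysis coefficients $\langle u, e_\lambda\rangle$ and peeling off the continuous frame first and the finite frame second. First I would use the disjointness hypothesis (\ref{disjointness}) to write $\Omega=\bigcup_{a\in A}(\Omega_1+a)$ as an essentially disjoint union, so that every $u\in L^2(\Omega)$ is determined by its translated restrictions $u_a(y):=u(y+a)\in L^2(\Omega_1)$, with $\|u\|_{L^2(\Omega)}^2=\sum_{a\in A}\|u_a\|_{L^2(\Omega_1)}^2$. Indexing $\Lambda$ by the pairs $(\lambda_1,j)\in\Lambda_1\times J$ via $\lambda=\lambda_1+j/N$ and changing variables $x=y+a$ on each piece gives
\[
\langle u, e_\lambda\rangle_{L^2(\Omega)}=\sum_{a\in A} e^{-2\pi i a\cdot(\lambda_1+j/N)}\,\langle u_a, e_{\lambda_1+j/N}\rangle_{L^2(\Omega_1)}.
\]
Here the hypothesis (\ref{suff-cond}) does exactly the needed work: it forces $e^{-2\pi i a\cdot\lambda_1}=1$, so the phase collapses to the finite-group character $e^{-2\pi i a\cdot j/N}$ and the translation parameter $a$ is decoupled from the continuous frequency $\lambda_1$.

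Next, fixing $j\in J$, I would absorb the modulation by setting $\phi^{(j)}(y):=e^{-2\pi i y\cdot j/N}\sum_{a\in A}e^{-2\pi i a\cdot j/N}u_a(y)\in L^2(\Omega_1)$; by linearity of the inner product this yields $\langle u, e_{\lambda_1+j/N}\rangle_{L^2(\Omega)}=\langle\phi^{(j)}, e_{\lambda_1}\rangle_{L^2(\Omega_1)}$ for every $\lambda_1$. Summing over $\lambda_1\in\Lambda_1$ and applying the frame bounds $\alpha,\beta$ of $(\Omega_1,\Lambda_1)$ to $\phi^{(j)}$ gives $\alpha\|\phi^{(j)}\|^2\le\sum_{\lambda_1}|\langle u,e_{\lambda_1+j/N}\rangle|^2\le\beta\|\phi^{(j)}\|^2$; summing these over $j\in J$ sandwiches $\sum_{\lambda\in\Lambda}|\langle u,e_\lambda\rangle|^2$ between $\alpha\sum_{j}\|\phi^{(j)}\|^2$ and $\beta\sum_{j}\|\phi^{(j)}\|^2$.

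Finally I would evaluate, using that the unimodular factor $e^{-2\pi i y\cdot j/N}$ drops out of the modulus,
\[
\sum_{j\in J}\|\phi^{(j)}\|^2=\int_{\Omega_1}\sum_{j\in J}\Big|\sum_{a\in A}e^{-2\pi i a\cdot j/N}u_a(y)\Big|^2\,dy,
\]
and for each fixed $y$ recognize the inner expression as $\sum_{j\in J}|\langle w(y),e_j\rangle_{\ell^2(A)}|^2$ for the vector $w(y)=(u_a(y))_{a\in A}$ and the finite exponentials $e_j(a)=e^{2\pi i a\cdot j/N}$. The frame bounds $c,C$ of $(A,J)$ give $c\|w(y)\|^2\le\sum_{j}|\langle w(y),e_j\rangle|^2\le C\|w(y)\|^2$ pointwise; integrating in $y$ and using $\int_{\Omega_1}\|w(y)\|_{\ell^2(A)}^2\,dy=\sum_{a}\|u_a\|^2=\|u\|_{L^2(\Omega)}^2$ yields $c\|u\|^2\le\sum_{j}\|\phi^{(j)}\|^2\le C\|u\|^2$. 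Chaining the two pairs of bounds then delivers $\alpha c\,\|u\|^2\le\sum_{\lambda\in\Lambda}|\langle u,e_\lambda\rangle|^2\le\beta C\,\|u\|^2$, which is the claim.

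I expect the main obstacle to be conceptual rather than computational: arranging the expansion so that the two frames act on genuinely independent variables. The coupling of the continuous frequency $\lambda_1$ and the finite frequency $j$ inside the single exponential $e_{\lambda_1+j/N}$ is what makes a naive factorization fail, and the role of (\ref{suff-cond}) is precisely to sever the link between $a$ and $\lambda_1$, leaving $a$ paired only with $j$ through the finite-group character. One must also be careful to treat the Minkowski sum $\Lambda_1+J/N$ as an indexed family over $\Lambda_1\times J$, and to invoke the disjointness (\ref{disjointness}) both in the orthogonal decomposition of $L^2(\Omega)$ and in the final norm identity.
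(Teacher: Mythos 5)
Your proposal is correct and follows essentially the same route as the paper's proof: expand $\langle u,e_{\lambda_1+j/N}\rangle_{L^2(\Omega)}$ over the disjoint translates, use condition (\ref{suff-cond}) to decouple $a$ from $\lambda_1$, apply the frame bounds of $(\Omega_1,\Lambda_1)$ to the auxiliary functions $\phi^{(j)}$ for each fixed $j$, and then apply the finite frame bounds of $(A,J)$ pointwise in $y$ before integrating. Your explicit two-sided sandwiching is in fact a bit cleaner than the paper, which writes out only the upper bound and asserts the lower bound is ``obtained similarly.''
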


	\begin{theorem}\label{examples of Riesz spectral pairs} With the assumptions of 
		Theorem \ref{examples of Frame spectral pairs} and condition (\ref{suff-cond}), $\mathcal E(\Lambda)$ is a Riesz basis for $L^2(\Omega)$ if $\mathcal E(\Lambda_1)$ is a Riesz basis for $L^2(\Omega_1)$ and $\mathcal E(J)$ is a  basis for $\ell^2(A)$. In this case, the frame constants are given as in Theorem \ref{examples of Frame spectral pairs}.
		
		Moreover, when $\Lambda_1$ is a lattice, the dual Riesz basis $\{g_\lambda\}_{\lambda\in \Lambda}$ is given by 
		\begin{align}\label{hlambda} 
			g_{\lambda_1+j/N}(x)  &=
			\left\langle G_{j}, E_{j_{s}} \chi_{\Pi_{\Lambda_{1}}}(x-\cdot)\right\rangle_{l^2(A)} e_{\lambda_1+{j/N}}(x)\qquad  \text{a.e.} \ x\in \Omega.
		\end{align} 
		Here, $\Pi_{\Lambda_{1}}$  is the fundamental domain of the lattice $\Lambda_1$,  $\chi_{\Pi_{\Lambda_1}}$ denotes the indicator function of the domain $\Pi_{\Lambda_{1}}$, and $E_j(z)=e^{2\pi i \frac{z\cdot j}{N}}$ and $G_j$ are given in (\ref{duals}). 
	\end{theorem}

	\begin{theorem}\label{example of spectral pair-general} With the assumptions of 
		Theorem \ref{examples of Frame spectral pairs}, $\mathcal E(\Lambda)$ is an orthogonal basis for $L^2(\Omega)$ if $\mathcal E(\Lambda_1)$ is an orthogonal basis for $L^2(\Omega_1)$ and $\mathcal E(J)$ is an orthogonal basis for $\ell^2(A)$ and (\ref{suff-cond}) holds. 	
	\end{theorem}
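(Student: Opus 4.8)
The plan is to verify the two defining properties of an orthogonal basis separately: completeness (totality) of $\mathcal E(\Lambda)$ in $L^2(\Omega)$, and pairwise orthogonality of its elements. Completeness comes essentially for free: an orthogonal basis is in particular a frame (and an orthogonal basis of $\ell^2(A)$ is in particular a frame for $\ell^2(A)$), so the hypotheses of Theorem \ref{examples of Frame spectral pairs} are met and $(\Omega,\Lambda)$ is a frame spectral pair. Since a frame satisfies the positive lower bound in (\ref{Frame_ineq_1}), any $u$ orthogonal to every $e_\lambda$ must vanish, so $\mathcal E(\Lambda)$ is complete. Thus the whole statement reduces to proving that the exponentials $\{e_{\lambda_1+j/N}\}$, indexed by $(\lambda_1,j)\in\Lambda_1\times J$, are mutually orthogonal in $L^2(\Omega)$; a complete orthogonal system of nonzero vectors is an orthogonal basis.

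For the orthogonality I would compute $\langle e_\lambda,e_{\lambda'}\rangle_{L^2(\Omega)}$ directly, writing $\lambda=\lambda_1+j/N$ and $\lambda'=\lambda_1'+j'/N$. Using the disjointness hypothesis (\ref{disjointness}) to split $\Omega=\bigcup_{a\in A}(\Omega_1+a)$ up to measure zero, and the change of variables $x=y+a$ on each piece, one obtains
\begin{equation*}
\langle e_\lambda,e_{\lambda'}\rangle_{L^2(\Omega)}
=\left(\sum_{a\in A} e^{2\pi i (\lambda-\lambda')\cdot a}\right)\left(\int_{\Omega_1} e^{2\pi i (\lambda-\lambda')\cdot y}\,dy\right),
\end{equation*}
the crucial point being that the inner integral does not depend on $a$ and so factors out of the lattice sum. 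Expanding $\lambda-\lambda'=(\lambda_1-\lambda_1')+(j-j')/N$ and invoking the decoupling condition (\ref{suff-cond}), which forces $e^{2\pi i(\lambda_1-\lambda_1')\cdot a}=1$ for every $a\in A$, the $A$-sum collapses to $\sum_{a\in A} e^{2\pi i (j-j')\cdot a/N}$, which is precisely the $\ell^2(A)$ inner product of the $\mathbb{Z}_N^d$-exponentials indexed by $j$ and $j'$.

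With this factorization, a two-stage case analysis finishes the argument. If $j\neq j'$, orthogonality of $\mathcal E(J)$ in $\ell^2(A)$ kills the $A$-sum, so $\langle e_\lambda,e_{\lambda'}\rangle_{L^2(\Omega)}=0$. If $j=j'$, the $A$-sum equals $\sharp A$ while $\lambda-\lambda'$ reduces to $\lambda_1-\lambda_1'$, so the surviving integral is $\langle e_{\lambda_1},e_{\lambda_1'}\rangle_{L^2(\Omega_1)}$, which vanishes whenever $\lambda_1\neq\lambda_1'$ by orthogonality of $\mathcal E(\Lambda_1)$ in $L^2(\Omega_1)$; on the diagonal $(\lambda_1,j)=(\lambda_1',j')$ it returns $\sharp A\cdot|\Omega_1|=|\Omega|$, the correct squared norm. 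Hence distinct index pairs yield orthogonal, in particular distinct and nonzero, exponentials, which simultaneously confirms that $\Lambda$ is faithfully indexed by $\Lambda_1\times J$ (note also that an orthogonal basis of $\ell^2(A)$ forces $\sharp J=\sharp A$).

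The step I expect to require the most care is the decoupling, not the bookkeeping: the frequencies $\lambda_1+j/N$ genuinely mix the continuous and finite components inside the $\Omega_1$-integral, so the conclusion is \emph{not} a formal tensor product of two orthogonality relations. What rescues the computation is that the $\Omega_1$-integral is constant in $a$ and therefore detaches from the lattice sum, together with hypothesis (\ref{suff-cond}), which strips the $\Lambda_1$-contribution out of that lattice sum and leaves a clean $\ell^2(A)$ inner product. This staging—first eliminate mismatched $j$ through the finite orthogonality, then eliminate mismatched $\lambda_1$ through the continuous orthogonality—is the crux of the proof, and it is exactly where (\ref{suff-cond}) is indispensable.
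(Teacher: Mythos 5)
Your proposal is correct and follows essentially the same route as the paper: both establish completeness by invoking Theorem \ref{examples of Frame spectral pairs}, and both prove orthogonality by splitting $\Omega$ over the translates $\Omega_1+a$, factoring the inner product into an $A$-sum times an $\Omega_1$-integral, using (\ref{suff-cond}) to reduce the $A$-sum to $\langle E_j,E_{j'}\rangle_{\ell^2(A)}$, and then running the same two-case analysis on $j$ versus $\lambda_1$. No substantive differences to report.
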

	
	As an indirect application of Theorem \ref{example of spectral pair-general}, we obtain the following result, which connects a domain's spectral property to sampling and reconstruction.
	
	\begin{proposition}\label{proposition1} 
		Assume that $N>0$, $A, J\subseteq \Bbb Z_N$. Let $\Omega:=[0,1]+A$ and $\Lambda := \Bbb Z +J/N$. Assume that $f\in PW_\Omega$ such that $\{f(\lambda)\}_{\Lambda}$ are predetermined. If $(A, J)$ is a spectral pair in $\Bbb Z_N$, then $f$ can be completely 
		recovered  and 
		$$\hat f(\xi) = (\sharp J)^{-1} \sum_{\lambda\in\Lambda} f(\lambda) e^{-2\pi i \lambda \xi} \quad   a.e.\ \xi\in \Omega.$$ 
	\end{proposition} 
	
	\subsection{Comparison with existing work}
	Regarding Theorem \ref{examples of Frame spectral pairs},   any bounded domain $\Omega$ with Lebesgue measure $0<|\Omega|<\infty$ admits an exponential tight frame $\mathcal E(\Lambda)$ by projecting an exponential orthonormal basis of a cube 
	(containing $\Omega$) onto $\Omega$. In this method, the frame spectrum $\Lambda$ is necessarily a lattice. In Theorem \ref{examples of Frame spectral pairs}, we provide an alternative method for the construction of exponential frames (not necessarily tight) for bounded domains where the frame spectrum is not necessarily a lattice. 
	
	Notice that Theorem \ref{examples of Riesz spectral pairs} illustrates how to construct a Riesz spectrum for the union of domains with equal measure. 
	%
	There are many cases where it is known that a set $\Omega$ admits a Riesz basis of exponential functions, such as   multi-tiling (bounded and unbounded) domains in $\Bbb R^{d}$ \cite{GL14,KL_2015,Agora_Gabrielli_2015,Gabriellei_Carbajal_PAMS2018}. Recently, it was established in \cite{Debernardi2019} that any convex polytope which is centrally symmetric and whose faces of all dimensions are also centrally symmetric, admits a Riesz basis of exponentials. For the 
	existence of exponential Riesz bases in other special cases see, e.g., \cite{Nitzan_Kozma_2014,DeCarli2015} and the references therein. 
	While there are known cases where $\Omega$ does not admit an orthogonal basis of exponentials, such as the unit Ball in $\Bbb R^d (d\geq 2)$ (for other cases see e.g.  \cite{IKT01,Fug01}), less is known about Riesz bases of exponentials.
	Finding a domain $\Omega$ that does not admit any Riesz basis of exponentials is still an unsolved problem.

	Below, we point out the difference between our results and some well-known results for the construction of exponential Riesz bases. 
	Constructive proofs of the existence  of exponential Riesz bases can be found in \cite{GL14}, and  later in  \cite{KL_2015} with fewer assumptions and a simpler proof.
	For example, in \cite{KL_2015}, the author considers a multi-tiling domain $\Omega$ in $\Bbb R^d$ which multi-tiles the space by a lattice $\Lambda^\perp$ and proves that the domain admits an exponential Riesz basis. More precisely, he obtains  a Riesz spectrum for the domain $\Omega$ using a finite union of translates of the dual lattice $\Lambda$, i.e., $\Lambda+J$. 
	Here, we choose a Riesz spectral pair $(\Omega, \Lambda)$ (in $\Bbb R^d)$ and a basis pair $(A, J)$ (in $\Bbb Z_N^d)$ and show that  $(\Omega+A, \Lambda+J/N)$ is a Riesz spectral pair in $\Bbb R^d$ if  (\ref{suff-cond}) is satisfied for $A$ and $\Lambda$. 
	%

	Another well-known example of exponential Riesz bases for a union of co-measurable cubes has been constructed by DeCarli \cite{DeCarli2015} for the union of unit cubes. In this paper, the author takes a finite set of vectors in $\Bbb R^d$ with an arithmetic progression and proves that the union of shifts of $\Bbb Z^d$ by these vectors is a Riesz spectrum for the union of cubes if and only if the evaluation matrix is invertible. Equivalently, for a given finite number of vectors in $\Bbb R^d$, the union of translations of $\Bbb Z^d$ by the vectors is a Riesz spectrum if the matrix is an invertible   Vandermonde matrix. Theorem \ref{examples of Riesz spectral pairs} requires fewer assumptions to establish the existence of Riesz bases for a finite union of unit cubes. More precisely, 
	when $\Omega_1$ is a $d$-dimensional cube, we construct a Riesz spectrum by taking the disjoint union of multi-rational shifts of $\Bbb Z^d$ with the sufficient condition that the matrix is invertible. 
	
	In Theorem \ref{examples of Riesz spectral pairs}, when $\Omega_1$ is a cube in $\Bbb R^d$, the structure of the Riesz spectrum for the union of cubes is similar to the 
	well-known example of sampling and interpolation sequences constructed by 
	Lyubarskii and Seip \cite{Lyubarskii1997} for the union of intervals of equal length in $\Bbb R$, and by Marzo in higher dimensions \cite{Marzo}. In \cite{Lyubarskii1997}, the authors consider a union of $p$ disjoint intervals with equal size and construct a sampling and interpolation sequence for the set using the $p$ shifts of the spectrum of a single interval. Like in the current paper, the sufficient condition is the invertibility of an associated matrix (\ref{associated_matrix}).  Our result in 
	Theorem \ref{examples of Riesz spectral pairs} provides machinery for such constructions with more general domains beyond intervals (or cubes).

	{Regarding Theorem \ref{example of spectral pair-general}}, in recent years, research on 
	orthogonal bases of exponentials has flourished in response to the development of exponential bases in Banach spaces, and in particular, to the growing interest in 
	the Fuglede Conjecture \cite{Fug74}. The Fuglede Conjecture asserts that every domain of $\Bbb R^d$ with positive finite Lebesgue measure admits an orthogonal basis of exponentials if and only if it tiles ${\Bbb R}^d$ by translation.  Although the Fuglede Conjecture is, in general, false, as shown by Tao in one direction \cite{T04} and by Kolountzakis and Matolcsi in the other \cite{KM04,KM06,M05}, it has given rise to active investigations of the connections between orthogonal bases of exponentials and tilings in Euclidean space. The conjecture has been proved affirmative in special cases in various settings (continuous and discrete). See, for example, \cite{GL20,IKT03,IMP17,PhilippBirklbauer,FMV2019,FKS_21}, and the references contained therein. 
	There are many cases where it is known that a domain admits no orthogonal exponential bases. See, for example, \cite{K99,IKT01,Fug01,IK13} and the references contained therein. The conjecture is still open in dimensions $d=1, 2$. However, there are special cases in these dimensions where the conjecture has been proved affirmative (see e.g. \cite{IKT01,Laba}).

	With regard to Theorem \ref{example of spectral pair-general}, we  shall point it out that when $\Omega_1$ is a $d$-dimensional cube, the new spectrum set that we construct in   Theorem \ref{example of spectral pair-general} is different from the well-known spectra in the literature, namely, the one for the union of cubes in $\Bbb R^d (d\geq 5)$ that was  presented by Tao in \cite{T04}. The example is used to disprove the direction ``spectral $\not\rightarrow$ tiling" of the Fuglede Conjecture. In the construction, Tao considers a union of translations of a spectral set in the finite domain $\Bbb Z_p^d$ and lifts it to a higher dimension. Our construction of a new spectral set is the result of adding a spectral set in  $\Bbb Z_p^d$ to a  spectral set in the continuous domain $\Bbb R^d$.

	Another well-known example of a spectral set  was constructed by \L aba in dimension $d=1$ \cite{Laba}. In her paper, \L aba characterizes the spectrum of the union of two co-measurable intervals. When the left endpoints of the intervals are integers, the spectrum of the union coincides with the construction of the spectrum we present in Theorem \ref{example of spectral pair-general}. Here, we construct a spectral set for any ($\geq 2$) union of   co-measurable  intervals. \\

	{\bf Outline.} 
	This paper is organized as follows. After introducing the notations and preliminaries in Section \ref{notations and preliminaries}, 
	we prove 
	Theorem \ref{examples of Frame spectral pairs} in  Section \ref{frames}. In Section \ref{ex:dual Riesz} we prove Theorem \ref{examples of Riesz spectral pairs} and illustrate the explicit structure of biorthogonal dual Riesz bases along with some examples for a special subclass of Riesz spectral pairs and Riesz bases, using the techniques that were developed earlier by the first listed author and Okoudjou in \cite{frederick2020}. Results and examples of dual bases in both the continuous and finite settings appear in Sections \ref{bioDualRiesB} and \ref{examples}. 
	In Section \ref{ex:spectral pair}, we prove Theorem \ref{example of spectral pair-general}  followed by examples of spectral pairs. 
	In Section \ref{sampling_section}  we prove Proposition \ref{proposition1}. 

	\section{Notations and preliminaries}\label{notations and preliminaries}
	
	Throughout this paper, $\Omega\subset\Bbb R^d$ is a  Lebesgue measurable set with $0<|\Omega|<\infty$, and $\Lambda\subset \Bbb R^{d}$ is  countable and discrete. The inner product of $u, v\in L^2(\Omega)$ is defined by 
	$\langle u, v\rangle_{L^2(\Omega)}= \int_{\Omega} u(x) \overline{v(x)} dx .$
	In the sequel, we shall drop the subscripts and simply write $\langle f, g\rangle$ when the underlying Hilbert space is clear from the context. 
	
	For $f\in L^2(\Bbb R^d)$, we denote by $\hat f$ or $\mathcal{F}(f)$ the 
	Fourier transform of $f$, and it is defined by 
	$\hat f(\xi) = \int_{\Bbb R^d} f(x) e^{-2\pi i x\cdot \xi} dx, \ \xi\in \Bbb R^d,$
	where, 
	$x\cdot \xi=\sum_{i=1}^d x_i\xi_i$ is the scalar products of two vectors. 
	The inverse Fourier transform, denoted by $\mathcal{F}^{-1}(\hat{f})=f$, is then given by 
	$f(x) = \int_{\Bbb R^d} \hat f(x) e^{2\pi i x\cdot \xi} dx, \ x\in \Bbb R^d.$

	Here, and in the sequel, we denote the cardinality of a finite set $F$ by $\sharp F$. 
	For $d, N\geq 1$, $\Bbb Z_N^d$ denotes the $d$-dimensional vector space over the cyclic group $\Bbb Z_N$. 
	For any functions $f, g\in \ell^2(\Bbb Z_N^d)$, the inner product is defined by 
	$\langle f, g\rangle_{\ell^2(\Bbb Z_N^d)} = N^{-d}\sum_{x\in \Bbb Z_N^d} f(x)\overline{g(x)} .$
	In general, for a function $f:\Bbb Z_N^d\to \Bbb C$, $d\geq 1$, the cyclic Fourier transform is given by 
	$\hat f(x) = N^{-d}\sum_{y\in \Bbb Z_N^d} e^{2\pi i y\cdot x/N}$.
	
	A frame spectral pair 
	in the finite setting can be defined in a fashion that is similar to the continuous setting.
	Definition 1 can also be expressed in terms of matrices, as follows: 
	Let $d, N\geq 1$, and assume that $A, J\subseteq \Bbb Z_N^d$, with $\sharp J \geq \sharp A$.
	Let $\mathcal F$ denote the $N^d\times N^d$ discrete Fourier transform matrix over $\Bbb Z_N$. For a pair 
	$(A, J)$, let $\mathcal {F}_{A,J}$ denote the submatrix of $\mathcal F$ (or {\it evaluation matrix}) with columns indexed by $J$ and rows indexed by $A$, i.e. 
	\begin{align}\label{associated_matrix}
		\mathcal {F}_{A,J} = 
		\begin{bmatrix} 
			\omega^{j\cdot a}
		\end{bmatrix}_{j\in J, a\in A} , 
	\end{align} 
	where $ \omega = e^{-\frac{2\pi i}{N}}$. 
	We say  that   $(A,J)$ is a {\it frame spectral pair in $\Bbb Z_N^d$} if 
	\begin{align}\notag
		c\|x\|_{\ell^2(A)}^2 \leq \|\mathcal F_{A,J}(x)\|_{\ell^2(J)}^2 \leq C \|x\|_{\ell^2(A)}^2, \quad \forall x\in \Bbb C^{\sharp A}, 
	\end{align} 
	where $0<c<C<\infty$. 
	The submatrix $\mathcal F_{A,J}$  induces a Riesz basis  (an orthogonal basis) for $\ell^2(A)$ if $\sharp A=\sharp J$ and it is invertible (unitary).

	Notice that the frame spectral property is a symmetry property when the frame is a Riesz basis or an orthogonal basis. This can be easily verified using the associated submatrices of each system.
	More precisely, if $\mathcal E(J)$ is a Riesz (or orthogonal) basis for $\ell^2(A)$, then the submatrix $\mathcal F_{A,J}$ is an invertible (or unitary) matrix. Since the transpose matrix preserves the invertibility and unitary property,  thus  $\mathcal E(A)$ is a Riesz (or orthogonal) basis for $\ell^2(J)$, as claimed. 
	The notion of symmetry discussed above loses its meaning in the infinite or continuous setting, e.g., $\Bbb Z_N^\infty$ or $\Bbb R^d$. These settings require a well-defined notion of symmetry, which is a challenging problem.
	\vskip.12in 
	
	{\it Notation:} In the sequel, we use $e_\lambda(x)= e^{2\pi i x\cdot \lambda}$ for exponential functions in $\Bbb R^d$, and $E_j(z)= e^{2\pi i \frac{z\cdot j}{N}}$ in the finite setting $\Bbb Z_N^d$.  
	
	\section{Proof of Theorem 1}\label{frames}

	\begin{proof}
		Assume that $\Omega$ and $\Lambda$ are given as in (\ref{addition})$-$(\ref{disjointness}), and also assume that (\ref{suff-cond}) holds. 
		Let  $u\in L^2(\Omega)$.  Then 
		\begin{align}\notag
			\sum_{(\lambda, j)\in \Lambda_1\times J} \left| \langle u, e_{\lambda+j/N}\rangle_{L^2(\Omega)}\right|^2 &= \sum_{(\lambda, j)\in \Lambda_1\times J} \left| \sum_{a\in A} \langle u, e_{\lambda+j/N}\rangle_{L^2(\Omega_1+a)} \right|^2 \\\notag
			&= \sum_{(\lambda, j)\in \Lambda_1\times J} \left| \sum_{a\in A} \int_{\Omega_1} u(x+a) \overline{e_{\lambda+j/N}(x+a)} dx\right|^2 \\\label{11}
			&= \sum_{(\lambda, j)\in \Lambda_1\times J} \left|  \int_{\Omega_1} \left(\sum_{a\in A}   u(x+a) \overline{e_{j/N}(x+a)}\right) \overline{e_\lambda(x)} dx\right|^2 .
		\end{align}
		The last line follows from (\ref{suff-cond}). For any fixed $j\in J$, since $(\Omega_1, \Lambda_1)$ is a frame spectral pair {and $u_{j}(x):=\sum_{a\in A}   u(x+a) \overline{e_{j/N}(x+a)}$ is in $L^{2}(\Omega_{1})$}, there exists an upper frame constant $\beta>0$ for which 
		\begin{align}\notag
			(\ref{11})  \leq  \beta \sum_{ j\in J}  \int_{\Omega_1} \left| \sum_{a\in A}   u(x+a) \overline{e_{j/N}(x+a)}  \right|^2dx 
			&= \beta  \int_{\Omega_1} \sum_{ j\in J}  \left| \langle  u(x+\cdot) , {e_{j/N}}\rangle_{\ell^2(A)} \right|^2dx.\\
			&\leq  C \beta \int_{\Omega_1}\|u(x+\cdot)\|^{2}_{\ell^2(A)} dx 
			=C\beta  \|u\|^{2}_{L^2(\Omega)},\notag
		\end{align} 	 
		where  $C$ is the upper frame constant for the frame spectral pair $(A,J)$ in $\Bbb Z_N^d$.  
		%
		This completes the proof of the upper frame estimate for the pair $(\Omega, \Lambda)$ with the upper bound constant $C\beta$. The lower bound estimate is obtained similarly. The completeness  of the frame sequence is obtained by  the frame lower bound property. 
	\end{proof}

	We  conclude this section with some examples. \\
	
	\noindent {\it Examples: } 
	\begin{enumerate} 
		\item For any given $A, J\subseteq \Bbb Z_N^d$ and any frame pair $(\Omega_1, \Lambda_1)$ in $\Bbb R^d$, the set of exponentials $\mathcal E(\Lambda)$ is a Bessel sequence in $L^2(\Omega)$ with the Bessel constant $\sharp A\sharp J$. When $\sharp A=1=\sharp J$, the Bessel sequence is a {\it tight frame}, defined as a sequence of functions in $L^{2}(\Omega)$ satisfying (\ref{Frame_ineq_1}) with equal frame constants, i.e., $c=C$. 
		\item    Assume that  the exponentials $\mathcal E(\Lambda_1)$ have a lower frame bound in $L^2(\Omega_1)$.  Then for any $A\subseteq \Bbb Z_N^d$ and $J\subseteq \Bbb Z_N^d$,  
		the exponentials  $\mathcal E(\Lambda)$  also have a lower frame bound   in $L^2(\Omega)$,  where  $\Lambda= \Lambda_1+J$ and $\Omega=\cup_{a\in A} \Omega_1+a$.
	\end{enumerate} 

	The following result is of independent interest.

	\begin{proposition}\label{completeness} 
		Suppose that  
		$\Omega_1\subset \Bbb R^d$ is a set with positive and finite Lebesgue measure, $\Lambda_1\subset \Bbb R^d$ is a discrete and countable set, and $N\geq 1$ is an integer. Let $A$ be any subset of $\Bbb Z_N^d$ with  $|\Omega_1+a \cap \Omega_1+a'|=0, \forall a'\in A, a'\neq a$, and let $J$ be any subset of  $\Bbb Z_N^d$. Define the pair $(\Omega, \Lambda)$ by
		\begin{equation}
			\Omega:= \Omega_1+A, \quad \Lambda:=\Lambda_1 +J/N.
		\end{equation}
		
		If $\mathcal E(\Lambda_1)$ is a complete set in $L^2(\Omega_1)$,
		then the family $\mathcal E(\Lambda)$ is also a complete set in $L^2(\Omega)$ provided that the condition (\ref{suff-cond})  holds. 
	\end{proposition}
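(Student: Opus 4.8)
The plan is to prove completeness directly. I would take an arbitrary $u\in L^2(\Omega)$ satisfying $\langle u, e_\lambda\rangle_{L^2(\Omega)}=0$ for every $\lambda\in\Lambda=\Lambda_1+J/N$, and show $u=0$ a.e. on $\Omega$. The idea is to peel off the two layers of structure one at a time: first use completeness of $\mathcal E(\Lambda_1)$ to collapse the integral over $\Omega_1$, and then use completeness of $\mathcal E(J)$ in $\ell^2(A)$ to collapse the finite sum over $A$. Throughout I use the essentially disjoint decomposition $\Omega=\bigsqcup_{a\in A}(\Omega_1+a)$ supplied by (\ref{disjointness}).

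\emph{First step.} Fix $\lambda_1\in\Lambda_1$ and $j\in J$, expand the orthogonality relation over the decomposition of $\Omega$, and change variables $x=y+a$ with $y\in\Omega_1$ in each piece. Since $(y+a)\cdot(\lambda_1+j/N)=y\cdot\lambda_1+a\cdot\lambda_1+y\cdot j/N+a\cdot j/N$, condition (\ref{suff-cond}) kills the factor $e^{2\pi i a\cdot\lambda_1}$ and the phase factorizes as
\[
\overline{e_{\lambda_1+j/N}(y+a)}=\overline{e_{\lambda_1}(y)}\,\overline{e_{j/N}(y)}\,\overline{E_j(a)}.
\]
Hence the relation reads $\langle g_j, e_{\lambda_1}\rangle_{L^2(\Omega_1)}=0$, where
\[
g_j(y):=\Bigl(\sum_{a\in A}u(y+a)\,\overline{E_j(a)}\Bigr)\overline{e_{j/N}(y)},
\]
which lies in $L^2(\Omega_1)$ as a finite sum of $L^2$ functions times a unimodular factor. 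Because this holds for every $\lambda_1\in\Lambda_1$ and $\mathcal E(\Lambda_1)$ is complete in $L^2(\Omega_1)$, I conclude $g_j=0$ a.e. on $\Omega_1$, for each $j\in J$.

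\emph{Second step.} As $\overline{e_{j/N}(y)}$ is unimodular, $g_j=0$ forces $\sum_{a\in A}u(y+a)\,\overline{E_j(a)}=0$ for a.e. $y\in\Omega_1$ and every $j\in J$. Introducing the measurable vector field $v(y):=(u(y+a))_{a\in A}\in\ell^2(A)$, this says precisely that $v(y)\perp E_j$ in $\ell^2(A)$ for every $j\in J$; equivalently, $\mathcal F_{A,J}\,v(y)=0$. Applying completeness of $\mathcal E(J)$ in $\ell^2(A)$ (i.e.\ injectivity of $\mathcal F_{A,J}$) pointwise in $y$ gives $v(y)=0$ a.e., that is, $u(y+a)=0$ for every $a\in A$ and a.e. $y\in\Omega_1$; by disjointness this is exactly $u=0$ a.e. on $\Omega$.

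The main obstacle I anticipate is not in the calculation but in the hypotheses: the argument genuinely requires, and I would add, completeness of $\mathcal E(J)$ in $\ell^2(A)$, which is absent from the statement as written. It is necessary, not merely convenient: taking $\Omega_1=[0,1)^d$, $\Lambda_1=\Bbb Z^d$ (so (\ref{suff-cond}) holds automatically for integer translates), $A=\{0,a_1\}$ with $\Omega_1$ and $\Omega_1+a_1$ disjoint, and $J=\{0\}$, the function $u=\chi_{\Omega_1}-\chi_{\Omega_1+a_1}$ is nonzero yet orthogonal to every element of $\mathcal E(\Lambda)=\mathcal E(\Lambda_1)$, so $\mathcal E(\Lambda)$ is not complete. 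This added condition holds automatically whenever $(A,J)$ is a frame spectral pair in $\Bbb Z_N^d$, which is the setting in which this Proposition is actually invoked. Apart from flagging this, the remaining points are routine: the integrability of $g_j$ noted above and the measurability of $y\mapsto v(y)$, which legitimizes running the finite-dimensional elimination for a.e. $y$.
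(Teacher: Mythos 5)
Your proof follows essentially the same route as the paper's: decompose $\Omega$ into the essentially disjoint translates $\Omega_1+a$, use condition (\ref{suff-cond}) to factor the phase, apply completeness of $\mathcal E(\Lambda_1)$ in $L^2(\Omega_1)$ to reduce the hypothesis to the vanishing of $\sum_{a\in A}u(\cdot+a)\overline{E_j(a)}$ a.e.\ on $\Omega_1$ for each $j\in J$, and then eliminate the finite sum over $A$. Your flag about the hypotheses is correct and is the one substantive point: the paper's own proof invokes ``the completeness of the pair $(A,J)$'' at exactly the step where you invoke injectivity of $\mathcal F_{A,J}$, yet this assumption does not appear in the statement of the Proposition, and your example with $J=\{0\}$, $A=\{0,a_1\}$, $u=\chi_{\Omega_1}-\chi_{\Omega_1+a_1}$ shows the conclusion genuinely fails without it. The remaining differences are cosmetic: the paper silently divides out the unimodular factor $e_{j/N}(x)$ and does not comment on the measurability of $y\mapsto (u(y+a))_{a\in A}$ or on interchanging ``for a.e.\ $y$'' with ``for all $j$,'' all of which are harmless since $A$ and $J$ are finite, as you note.
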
 
	\begin{proof}  Let $u\in L^2(\Omega)$ and $\lambda=\lambda_1+j/N\in \Lambda$. Then 
		\begin{align}\label{comp} 
			\langle u, e_\lambda\rangle_{L^2(\Omega)} &= \sum_{a\in A} \langle u, e_{\lambda_1+j/N} \rangle_{L^2(\Omega_1+a)} \\\nonumber
			&= \sum_{a\in A} \langle u(\cdot+a), e_{\lambda_1+j/N}(\cdot+a) \rangle_{L^2(\Omega_1)} \\\nonumber
			&= \langle \sum_{a\in A} u(\cdot+a) e_{j/N}(\cdot+a) , e_{\lambda_1} \rangle_{L^2(\Omega_1)}.
		\end{align} 
		Assume that $\langle u, e_\lambda\rangle_{L^2(\Omega)} = 0$ for all $ \lambda=\lambda_1+j/N\in\Lambda$, (i.e. for all $j\in J$ and $\lambda_1\in \Lambda_1$). By the completeness of the pair $(\Omega_1,\Lambda_1)$, by (\ref{comp}) for all $j\in J$ we obtain 
		
		$$ \sum_{a\in A} u(x+a) e_{j/N}(x+a) = 0 \quad a.e. \ x\in \Omega_1 .$$ 
		Or,
		$$ \sum_{a\in A} u(x+a) e_{j/N}(a) = 0 \quad a.e. \ x\in \Omega_1 .$$ 
		By the completeness of the pair $(A,J)$ in the setting of $\Bbb Z_N^d$, the preceding equality implies that for all $a\in A$, 
		$$ u(x+a) = 0 \quad a.e. \ x\in \Omega_1, $$
		or equivalently, 
		$$u(x)= 0 \quad a.e.\ x\in \Omega_1+a .$$
		This proves that $u=0$ in $L^2(\Omega_1+a)$ for all $a\in A$ and therefore $u=0$ in $L^2(\Omega)$. 
	\end{proof}

	\section{Proof of Theorem 2}\label{ex:dual Riesz} 
	First we have a motivational result. 
	\begin{proposition}\label{prop_Riesz_seq}
		Assume that $\mathcal E(\Lambda_1)$ is a Bessel sequence in $L^2(\Omega_1)$ with a Bessel constant $C>0$, 
		and let $A, J \subset \Bbb R^d$ be any finite sets. Define  $$\Omega:=\Omega_1+A \ , \ \Lambda:=\Lambda_1 +J .$$ 
		Then for the family of exponentials $\mathcal E(\Lambda)$ the Bessel inequality holds, provided that (\ref{suff-cond}) holds. Indeed,  for any finite set of scalars $\{c_{(\lambda, j)}\}_{(\lambda, j)\in F}$, $F=\Gamma \times I\subset \Lambda_1\times J$, we have 
		\begin{align}\label{Bessel-ineq}
			\left\| \sum_{(\lambda, j)\in F} c_{(\lambda, j)}e_{\lambda+j} \right\|_{L^2(\Omega)}^2 \leq B \sum_{(\lambda, j)\in F} |c_{(\lambda, j)}|^2, 
		\end{align}
		where $B=(\sharp A\sharp J) C $. 
	\end{proposition}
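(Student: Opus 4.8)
The plan is to transfer the Bessel estimate on $\Omega$ to the Bessel hypothesis for $\mathcal E(\Lambda_1)$ on the single building block $\Omega_1$, paying a combinatorial price of $\sharp A$ for the translation set $A$ and $\sharp J$ for the translation set $J$. Fix a finite family $\{c_{(\lambda,j)}\}_{(\lambda,j)\in F}$ with $F=\Gamma\times I\subset\Lambda_1\times J$, and set $f:=\sum_{(\lambda,j)\in F}c_{(\lambda,j)}e_{\lambda+j}$. The first step is to dominate the norm over $\Omega$ by a sum over the translates: since $\Omega=\Omega_1+A=\bigcup_{a\in A}(\Omega_1+a)$ and $|f|^2\ge 0$, covering $\Omega$ by the (possibly overlapping) sets $\Omega_1+a$ gives $\|f\|_{L^2(\Omega)}^2\le\sum_{a\in A}\int_{\Omega_1+a}|f(x)|^2\,dx=\sum_{a\in A}\int_{\Omega_1}|f(x+a)|^2\,dx$. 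This accounts for the factor $\sharp A$, and it is precisely where the absence of a disjointness hypothesis is harmless: overlaps only enlarge the right-hand side, and when the translates are disjoint the inequality is in fact an equality.

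Next, for each fixed $a\in A$ I would split off the $J$-variable. Writing $e_{\lambda+j}(x+a)=e^{2\pi i a\cdot(\lambda+j)}e_j(x)e_\lambda(x)$ and grouping the sum by $j$, one obtains $f(x+a)=\sum_{j\in I}e_j(x)\,g_{a,j}(x)$, where $g_{a,j}:=\sum_{\lambda\in\Gamma}c_{(\lambda,j)}e^{2\pi i a\cdot(\lambda+j)}e_\lambda$ is a finite linear combination of the system $\mathcal E(\Lambda_1)$. Since $|e_j(x)|=1$, applying Cauchy--Schwarz to the sum over the at most $\sharp J$ indices $j\in I$ yields the pointwise bound $|f(x+a)|^2\le\sharp J\sum_{j\in I}|g_{a,j}(x)|^2$, and hence $\int_{\Omega_1}|f(x+a)|^2\,dx\le\sharp J\sum_{j\in I}\|g_{a,j}\|_{L^2(\Omega_1)}^2$. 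This is the step that produces the factor $\sharp J$.

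Finally I would invoke the Bessel hypothesis. Multiplication by the unimodular function $e_j$ is a unitary operator on $L^2(\Omega_1)$, so $\{e_{\lambda+j}\}_{\lambda\in\Lambda_1}$ is Bessel in $L^2(\Omega_1)$ with the same bound as $\mathcal E(\Lambda_1)$; equivalently, the synthesis form of the Bessel bound gives $\|g_{a,j}\|_{L^2(\Omega_1)}^2\le C\sum_{\lambda\in\Gamma}|c_{(\lambda,j)}|^2$, using $|c_{(\lambda,j)}e^{2\pi i a\cdot(\lambda+j)}|=|c_{(\lambda,j)}|$. Summing over $j\in I$, then over $a\in A$, and reassembling $\sum_{j\in I}\sum_{\lambda\in\Gamma}=\sum_{(\lambda,j)\in F}$ gives $\|f\|_{L^2(\Omega)}^2\le\sharp A\,\sharp J\,C\sum_{(\lambda,j)\in F}|c_{(\lambda,j)}|^2$, which is (\ref{Bessel-ineq}); in the orthogonal-basis normalization relevant here one has $C=|\Omega_1|$, so the constant is exactly $B=\sharp A\,\sharp J\,|\Omega_1|$.

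I expect the only genuine subtlety to be the correct use of the Bessel bound in \emph{synthesis} form rather than analysis form; these carry the same constant because the synthesis operator is the adjoint of the analysis operator, and that identification, together with the remark that multiplication by $e_j$ preserves the Bessel bound, is what keeps the final constant free of any dependence on $\sharp\Gamma$. The domination over overlapping translates and the two Cauchy--Schwarz estimates are otherwise routine.
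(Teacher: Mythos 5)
Your proposal is correct and follows essentially the same route as the paper's proof: cover $\Omega$ by the (possibly overlapping) translates $\Omega_1+a$ to gain the factor $\sharp A$, apply Cauchy--Schwarz in the $j$-variable to gain $\sharp J$, and finish with the Bessel bound for $\mathcal E(\Lambda_1)$ in synthesis form. The only difference is cosmetic: you shift variables to $\Omega_1$ and track the unimodular phases $e^{2\pi i a\cdot(\lambda+j)}$ explicitly, while the paper works directly on $L^2(\Omega_1+a)$; your remark that the stated constant $B=\sharp A\,\sharp J\,|\Omega_1|$ corresponds to $C=|\Omega_1|$ correctly reconciles the statement with the bound $C\,\sharp A\,\sharp J$ that the argument actually yields.
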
 
	\begin{proof}  
		The proof is obtained using the Cauchy-Bunyakovsky-Schwarz inequality:
		\begin{align}\label{Bessel_seq_prep} 
			\left\| \sum_{(\lambda, j)\in F} c_{(\lambda, j)}e_{\lambda+j} \right\|_{L^2(\Omega)}^2&= \left\| \sum_{j\in I}\left( \sum_{\lambda\in \Gamma} c_{(\lambda, j)} e_{\lambda}\right) e_j \right\|_{L^2(\Omega)}^2\\\notag
			& \leq \sum_{a\in A} \left\| \sum_{j\in I}\left( \sum_{\lambda\in \Gamma} c_{(\lambda, j)} e_{\lambda}\right) e_j \right\|_{L^2(\Omega_1+a)}^2 .
		\end{align} 
		For $a\in A$, 
		\begin{align}\notag
			\int_{\Omega_1+a} \left| \sum_{j\in I}\left(\sum_{\lambda\in \Gamma} c_{(\lambda,j)} e_\lambda(x)\right) e_j(x)\right|^2 dx 
			& \leq (\sharp I) \int_{\Omega_1+a} \sum_{j\in I}\left|\sum_{\lambda\in \Gamma} c_{(\lambda,j)} e_\lambda(x)\right|^2 dx   & 
			\\\nonumber
			&  \leq  (\sharp J) \sum_{j\in I} \int_{\Omega_1+a} \left|\sum_{\lambda\in \Gamma} c_{(\lambda,j)} e_\lambda(x)\right|^2 dx & \\\nonumber
			&= (\sharp J) \sum_{j\in I} \left\|\sum_{\lambda\in \Gamma} c_{(\lambda,j)} e_\lambda(x)\right\|_{L^2(\Omega_1+a)}^2 & \\\nonumber 
			&\leq  C (\sharp J)   \sum_{(\lambda, j)\in F} | c_{(\lambda,j)}|^2. & 
		\end{align} 
		The last inequality is obtained using the Bessel property of $\mathcal E(\Lambda_1)$ in $L^2(\Omega_1)$ and the property (\ref{suff-cond}). 
		Now, using the inequality in  (\ref{Bessel_seq_prep}) and after the summing over $A$,  the Bessel inequality (\ref{Bessel-ineq}) holds for $\mathcal E(\Lambda)$ in $L^2(\Omega)$  with a Bessel constant  $B= (\sharp A \sharp J)C$. 
	\end{proof}
	
	Theorem  \ref{examples of Riesz spectral pairs} improves the result of   Proposition \ref{prop_Riesz_seq} in the sense that a Bessel sequence   is a  Riesz sequence or a  Riesz basis  when additional assumptions on $A, J$ and $\mathcal E(\Lambda_1)$ are satisfied.

	\begin{proof}[Proof of Theorem \ref{examples of Riesz spectral pairs}] The completeness of 
		the exponentials $\mathcal E(\Lambda)$ 
		in $L^2(\Omega)$ is due to Theorem    \ref{examples of Frame spectral pairs}.  Indeed, $\mathcal E(\Lambda)$    is a frame for $L^2(\Omega)$ and is therefore complete. It is then sufficient to prove 
		the Riesz inequalities (\ref{RieszB}) for $\mathcal E(\Lambda)$.  For this, let 
		$F$ be a finite index set as in Proposition \ref{prop_Riesz_seq} and $\{c_{(\lambda,j)}\}_{(\lambda,j)\in F}$ be any finite set of scalars. Using the equality in (\ref{Bessel_seq_prep}), we have 
		\begin{align}\notag
			\left\| \sum_{(\lambda, j)\in F} c_{(\lambda, j)}e_{\lambda+j/N} \right\|_{L^2(\Omega)}^2&= \sum_{a\in A} \left\| \sum_{j\in I}\left( \sum_{\lambda\in \Gamma} c_{(\lambda, j)} e_{\lambda}\right) e_{j/N} \right\|_{L^2(\Omega_1+a)}^2 \\\nonumber
			& = \sum_{a\in A} \int_{\Omega_1+a} \left| \sum_{j\in I}\left( \sum_{\lambda\in \Gamma} c_{(\lambda, j)} e_{\lambda}(x)\right) e_{j/N}(x) \right|^2 dx \\\label{last_line}
			& = \int_{\Omega_1} \sum_{a\in A} \left| \sum_{j\in I}\left( \sum_{\lambda\in \Gamma} c_{(\lambda, j)} e_{\lambda+j/N}(x)\right) e_{j/N}(a) \right|^2 dx.
		\end{align}
		The last equality is obtained after applying the assumption that $e_\lambda(a)=1$ for all $\lambda\in \Lambda_1$ and $a\in A$. 
		
		For $j\in I$, define 
		$ d_j(x):= e_{j/N}(x) \sum_{\lambda\in \Gamma} c_{(\lambda, j)} e_{\lambda}(x)$ \text{a.e.} $x\in \Omega_1.$ Then,
		\begin{align}\label{eq:112}
			(\ref{last_line}) = \int_{\Omega_1} \sum_{a\in A} \left| \sum_{j\in I} d_j(x) e_{j/N}(a) \right|^2 dx.  
		\end{align}
		Since $\mathcal E(J)$ is a basis for $\ell^2(A)$, it is a Riesz basis by a result in  \cite{FMS_2019}. Assume that $\beta>0$ is the upper frame constant for the Riesz basis.   Then we have 
		\begin{align}\notag
			(\ref{eq:112})&\leq \beta \int_{\Omega_1}  \sum_{j\in I} |d_j(x)|^2 dx \\\notag
			&= \beta \sum_{j\in I} \int_{\Omega_1} \left|\sum_{\lambda\in \Gamma} c_{(\lambda, j)} e_{\lambda}(x)\right|^2 dx \\\notag
			&= \beta \sum_{j\in I} \left\|\sum_{\lambda\in \Gamma} c_{(\lambda, j)} e_{\lambda}\right\|_{L^2(\Omega_1)}^2  \\\notag
			&\leq \beta  C \sum_{j\in I} \sum_{\lambda\in \Gamma} |c_{(\lambda, j)}|^2.   
		\end{align} 
		Notice that we obtained the last inequality by the Riesz basis property of $\mathcal E(\Lambda_1)$ in $L^2(\Omega_1)$. 
		A lower Riesz bound can be obtained with a similar calculation. This completes  the proof for the first part of the theorem. The proof of the  second part  is illustrated in Section \ref{multi-tiling-domains}.
	\end{proof}

	\subsection{Explicit form of biorthogonal dual Riesz bases}\label{bioDualRiesB}
	
	It is known that any Riesz basis in a Hilbert space has a biorthogonal dual Riesz basis \cite{Ole-book}.  In the 
	setting of exponential Riesz bases, this statement  reads as follows. 
	\begin{proposition}\label{dual-Riesz} 
		Given any Riesz basis $\mathcal E(\Lambda)$ for $L^2(\Omega)$, there is a unique collection of functions $\{h_\lambda\}_{\lambda\in \Lambda}$ in $L^2(\Omega)$ such that the biorthogonality condition holds: 
		\begin{align}\notag
			\langle h_\lambda, e_{\lambda'}\rangle=|\Omega| ~ \delta_\lambda(\lambda') \quad \forall \ \lambda, \lambda'\in \Lambda.
		\end{align} 
		Moreover $\{h_\lambda\}_{\lambda\in\Lambda}$ is a Riesz basis for $L^2(\Omega)$ and any $u\in L^2(\Omega)$ can be represented uniquely as 
		\begin{align}\notag
			u= |\Omega|^{-1} \sum_{\lambda\in \Lambda} \langle u, h_\lambda\rangle e_\lambda = |\Omega|^{-1} \sum_{\lambda\in \Lambda} \langle u, e_\lambda \rangle h_\lambda.
		\end{align}
	\end{proposition} %
	Here, $\delta_t$ is the Dirac function  given by 
	\begin{align}\label{Dirac Delta with mass point} 
		\delta_t (x) = \begin{cases} 1 & \text{if} \ x=t\\
			0 & \text{otherwise} . 
		\end{cases} 
	\end{align}
	The basis $\{h_\lambda\}$ is called {\it biorthogonal dual} Riesz basis for $\mathcal E(\Lambda)$. 
	When $\mathcal E(\Lambda)$ is an orthogonal basis for $L^2(\Omega)$,   
	the system of exponentials is self-dual, i.e. $e_\lambda=h_\lambda$, and in (\ref{RieszB}) we have $c=C=|\Omega|^{-1}$.

	\subsubsection{Finite  case: $\Bbb Z_N^d$}
	Let $(A, J)$ be a Riesz spectral pair in $\mathbb{Z}^{d}_{N}$, and suppose that $k=\#A$, that is, $A=\{a_r\}_{r=1}^k$, and $ J=\{j_s\}_{s=1}^k$. This means that the $k\times k$ matrix $\mathcal{F}_{A,J}$ in (\ref{associated_matrix}) is invertible.
	
	For $F\in  \ell^{2}(A)$,
	\[\langle F, E_{j_{s}}\rangle = \sum_{r=1}^{k} F(a_{r})  \omega^{a_r\cdot j_s},\] 
	where $\omega=e^{-2\pi i /N}$. 
	The reconstruction of $F$ in $l^2(A)$ using the Riesz basis $\mathcal E(J)$ 
	is accomplished using the linear system
	\begin{align}\label{Dual_system} 
		\begin{pmatrix}
			\langle F, E_{j_{1}}\rangle\\
			\langle F, E_{j_{2}}\rangle\\
			\vdots\\
			\langle F, E_{j_{k}}\rangle
		\end{pmatrix} =
		\mathcal{F}_{A,J}
		\begin{pmatrix}
			F(a_{1})\\
			F(a_{2})\\
			\vdots\\
			F(a_{k})
		\end{pmatrix}
	\end{align} 
	Assume that $\{G_{j_s}\}_{1\leq s\leq k}$ is the dual in $\ell^2(A)$. 
	Then, we see that by biorthogonality $\langle G_{j_{s}}, E_{j_{s'} }\rangle = \#A \delta_s(s')$, we can recover the dual functions $G_{j_{s}}$. More precisely, for $F= G_{j_{s}}$ in (\ref
	{Dual_system}) we get 
	\begin{align}\notag
		\mathcal{F}_{A,J}^{-1}(k{{\bf e}_{s}}) =
		%
		\begin{pmatrix}
			G_{j_{s}}(a_{1})\\
			G_{j_{s}}(a_{2})\\
			\vdots\\
			G_{j_{s}}(a_{k})
		\end{pmatrix},
	\end{align}
	where ${\bf e}_{s}$ is a $k$-dimensional vector with $({\bf e}_{s})_{t} = 1$ if $t=s$ and $0$ elsewhere. 
	In summary, the biorthogonal dual Riesz basis can be obtained using the $s$th column of $\mathcal{F}_{A,J}^{-1}$:
	\begin{align}\label{duals}
		G_{j_{s}}(a_{r})  = k (\mathcal{F}_{A,J}^{-1})_{r,s} \qquad r=1, \hdots, k.
	\end{align}
	Notice that by (\ref{duals}), the dual basis is also a set of exponentials only when $\mathcal F_{A, J}$ is a unitary matrix.

	\subsubsection{Continuous case: multi-tiling domains and proof of  (\ref{hlambda})}\label{multi-tiling-domains}
	In this section, we shall illustrate an explicit form of the dual Riesz basis for a class of exponential Riesz bases for a bounded domain $\Omega$ given as in Theorem \ref{examples of Riesz spectral pairs} (although the result extends to a class of unbounded domains). First let us recall a  definition. For a  lattice $\Lambda =M(\Bbb Z^d)$ given by an invertible  $d\times d$ matrix $M$, the volume of $\Lambda$ is defined by $vol(\Lambda)= det(M)$. 
	
	Let $\Lambda_{1}^{*}$ again be a full lattice with a fundamental domain $\Pi_{\Lambda^{*}_{1}}$ and dual lattice $\Lambda_{1}$. Then, let $A=\{a_r\}_{r=1}^k \subset \mathbb{Z}^{d}_{N}$,  and let $J=\{j_s\}_{s=1}^k\subset \mathbb{Z}^{d}_{N}$ be a finite set of vectors such that the $k\times k$ matrix $\mathcal{F}_{A,J}$ in (\ref{associated_matrix}) is invertible.
	Then, $(A, J)$ is a Riesz spectral pair. Assume  that $(\Omega_{1}, \Lambda_{1})$ is also a Riesz spectral pair in $\Bbb R^d$, such that the assumptions (\ref{suff-cond}) and (\ref{disjointness}) hold. Then $\mathcal E(\Lambda)$ is a Riesz basis for $L^2(\Omega)$ as given in Theorem \ref{examples of Riesz spectral pairs}. 
	
	\begin{figure}[t!] 
		\centering
		\includegraphics[scale=1.2]{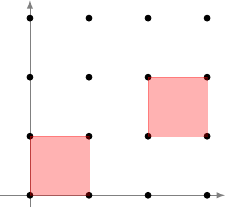} \hspace{1.3cm} 
		\includegraphics[scale=1.2]{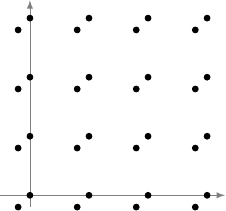} \caption{\footnotesize An example of a multi-tiling set of level $k=2$  in dimension $d=2$ and a corresponding Riesz spectrum.} 
		\label{fig:2dspectralpair}
	\end{figure}
	
	In the given situation, the associated biorthogonal Riesz basis to $\mathcal E(\Lambda)$ with $\Lambda=\Lambda_{1}+\frac{1}{N} J$ has been recently illustrated explicitly and constructively by the first author and Okoudjou in \cite{frederick2020}. The result is presented in \cite{frederick2020} for a more general class of multi-tiling sets, but here we present the special case where $\Omega$ is a multi-rectangle in $\mathbb{R}^{d}$. 
	In this case, 
	%
	for $\lambda\in\Lambda_{1}$ and $1\leq s\leq k$, the dual basis functions in $L^2(\Omega)$ are given by 
	\begin{align}\notag\label{hlambda} 
		g_{\lambda+j_{s}/N}(x) 
		&=
		e_{\lambda+{j_{s}/N}}(x) \langle G_{j_{s}}, E_{j_{s}} \chi_{\Pi_{\Lambda_{1}}}(x-\cdot)\rangle_{l^{2}(A)}\qquad  \text{ a.e.} \ x\in \Omega.
	\end{align} 
	Here, $\chi_{\Pi_{\Lambda_{1}}}$ denotes the indicator function of the domain $\Pi_{\Lambda_{1}}$. For an illustration of a multi-tiling set in dimension $d=2$ see Figure \ref{fig:2dspectralpair}. 
	
	\begin{remark}
		If the system is self-dual, then this formula implies that $(\mathcal{F}_{A,J})^{-1}_{rs}=\frac{1}{k} e^{2\pi i a_r\cdot j_{s}/N}=
		\frac{1}{k}\bar\omega^{a_r\cdot j_s}$. (Recall that $k=\sharp A$.) In this case, 
		$(\Omega, \Lambda)$ is a spectral pair if and only if $(\mathcal{F}_{A,J})^*(\mathcal{F}_{A,J})=kI$ meaning that $\mathcal{F}_{A,J}$ is a (log) Hadamard matrix \cite{KM06}.
	\end{remark}
	
	\begin{remark} A special case is a 1-tiling with respect to the full lattice  
		$\Lambda_{1}^{*}$  with a fundamental domain $\Omega_{1} = \Pi_{\Lambda_{1}}$ and dual lattice $\Lambda_{1}$. In this case, for any $f\in L^2(\Omega_1)$ we have 
		\begin{align}\notag
			\sum_{\lambda\in\Lambda_{1}}\langle f, e_{\lambda} \rangle e_{\lambda}(x) 
			\notag
			&=vol(\Lambda_{1}) \sum_{\lambda^{*}\in\Lambda_{1}^{*}}f (x-\lambda^{*}) = vol(\Lambda_{1}) f(x).
		\end{align}
		Here we used the Poisson summation formula and the fact that $\Omega_{1}$ is a fundamental domain of $\Lambda_{1}^{*}$ (the only value of $\Lambda^{*}$ that gives a nonzero summand is $\lambda^{*}=0$).
		This means that for $f\in  L^{2}(\Omega_{1})$,
		\[f(x) = \frac{1}{vol(\Lambda_{1})}\sum_{\lambda\in\Lambda_{1}}\langle f, e_{\lambda} \rangle e_{\lambda}(x) = \sum_{\lambda\in\Lambda_{1}} \langle f, e_{\lambda}\rangle g_{\lambda}(x).\]
		The biorthogonal dual Riesz basis is then given by 
		\[g_{\lambda}(x) = \frac{1}{vol(\Lambda_{1})} e_{\lambda}(x).\]

	\end{remark} 
	
	\subsection{Examples}\label{examples}
	In this section, we provide few examples of a Riesz spectral pair as well as the biorthogonal dual basis (\ref{hlambda}) in dimensions $d=1,2$. 
	Let $\Lambda_{1}=\ZZ^2$, and let  $\Pi_{\Lambda_{1}}=\mathcal Q_2$ be the unit square. For a pair of distinct multi-integers $a_1,a_2\in \Lambda_{1}$, let 
	$$\Omega:=( \mathcal Q_2+a_1)\cup (\mathcal Q_2+a_2).$$
	The set $\Omega$ multi-tiles $\Bbb R^2$ with the lattice $\Bbb Z^2$ at level $k=2$. 
	Let $N>2$ be an integer and $j_1, j_2$ be any two vectors in $\Bbb Z^2$ such that 
	the following matrix is invertible:

	\begin{align}\labeleq{evaluation-matrix}
		V=\mathcal{F}_{A,J}:=\begin{pmatrix}
			\omega^{a_1\cdot j_1} & \omega^{a_2\cdot j_1} \\
			\omega^{a_1\cdot j_2} & \omega^{a_2\cdot j_2} 
		\end{pmatrix}.
	\end{align}
	
	Take 
	\begin{align}\notag
		W=(\mathcal{F}_{A,J})^{-1}=\frac{1}{\text{det}(V)}\begin{pmatrix}
			\omega^{a_2\cdot j_2} & -\omega^{a_2\cdot j_1} \\
			-\omega^{a_1\cdot j_2} & \omega^{a_1\cdot j_1} 
		\end{pmatrix}.
	\end{align}
	Since $\text{det}(\mathcal{F}_{A,J})\neq 0$, then the following system of exponential functions is a Riesz basis for $L^2(\Omega)$:
	
	\begin{align}\notag
		\{e_{n+j_{1}/N}(x)\}_{n\in \ZZ^d}\cup \{e_{n+j_{2}/N}(x)\}_{n\in \ZZ^d}, 
	\end{align}
	and by 
	(\ref{hlambda})
	the dual basis functions are given by 
	\begin{align}\notag
		\{g_{n+j_{1}/N}(x)\}_{n\in \ZZ^d}\cup \{g_{n+j_{2}/N}(x)\}_{n\in \ZZ^d}, 
	\end{align}
	where 
	\begin{align}\notag
		g_{n+j_{1}/N}(x)=2 e_{n+j_{1}/N}(x) \left( {v_{11}} w_{11}\chi_{\mathcal Q_2}(x-a_1)+ {v_{12}} w_{21}\chi_{\mathcal Q_2}(x-a_2)\right) \quad n\in \ZZ^2, \end{align}
	and 
	\begin{align}\notag g_{n+j_{2}/N}(x)=2 e_{n+j_{2}/N}(x)\left(v_{21} w_{12}\chi_{\mathcal Q_2}(x-a_1)+v_{22} w_{22}\chi_{\mathcal Q_2}(x-a_2)\right) \quad n\in \ZZ^2. 
	\end{align}
	
	\begin{example}(case $d=1$)\label{ex1} Following the notation in Section \ref{examples}, 
		let $\{a_1=0,a_2=2\}$,  let $\{j_1=0, j_2= 1\}$, and let $N>2$ be  any integer. Then the matrix $V$ in \refeq{evaluation-matrix} is invertible, so $(\Omega, \Lambda)$ is a Riesz spectral pair, where
		$$\Omega= [0,1] \cup [2,3]\ , \ \Lambda = \Bbb Z \cup \Bbb Z+ 1/N.$$
		
		Then the exponential Riesz basis constructed above coincides with the exponential Riesz basis given as in Theorem \ref{examples of Riesz spectral pairs}, when $\Omega_1=[0,1], \Lambda_1=\Bbb Z$, $A=\{0, 2\}$, $J=\{0, 1\}$, and $N>2$. By the symmetry property, we obtain that $\tilde\Omega=[0,2]$ and $\tilde\Lambda =\Bbb Z\cup \Bbb Z+2/N$ is also a Riesz spectral pair.    It can be readily verified that the system is an orthogonal basis only for $N=4$; see Theorem \ref{example of spectral pair-general}. In this case, $V$ is a unitary matrix with $V^*V=2I$ and the basis is self-dual. 
	\end{example} 
	
	\begin{example}(case $d=2$)
		As a concrete example in $d=2$, let $N=4$, $a_1=(0,0)$, $a_2=(2,0)$, $j_1=(0,0)$, and $j_2=(1,0)$. For this choice, the matrix $V$ in \refeq{evaluation-matrix} is invertible, so the following pair  $(\Omega, \Lambda)$ builds a Riesz spectral pair: 
		$$\Omega= \mathcal Q_2 \cup \mathcal Q_2+(2,0)\ , \ \Lambda = \Bbb Z^2 \cup \Bbb Z^2+ (1/4,0) .$$ 
		By the symmetry property, for $\tilde \Omega= [0,2]\times [0,1]$ and $\tilde\Lambda= \Bbb Z^2\cup \Bbb Z^2+(1/2,0)= 2^{-1}\Bbb Z\times \Bbb Z$, the pair $(\tilde\Omega, \tilde\Lambda)$ is also  a Riesz spectral pair, as we expected.
	\end{example} 
	
	\begin{example}(case $d>1$) Let  $\Omega$ be the convex hull of  a finite number of points in $\Bbb R^d$ which is centrally symmetric.  Moreover, assume that all faces of  $\Omega$ in all dimensions are also centrally symmetric. Then by Theorem 1.1 in \cite{DN_20}, $L^2(\Omega)$ has a Riesz basis of exponentials $\mathcal E(\Lambda)$.   Assume that $(A,J)$ is a pair in $\Bbb Z_N^d$ whose evaluation matrix is invertible. Then the pair $(\Omega+A, \Lambda+J)$ is a Riesz spectral pair in $\Bbb R^d$  if (\ref{suff-cond}) holds. 
	\end{example} 
	
	\section{Proof of Theorem \ref{example of spectral pair-general} and 
		Proposition \ref{proposition1}}\label{ex:spectral pair}   
	

	Given positive integers $N, d\geq 1$, and sets $A, J\subseteq \Bbb Z_N^d$ of the same cardinality, we say  that the family of exponentials  $\mathcal E(J)=\{ E_j(z)=e^{2\pi i j\cdot z /N}\}_{j\in J}$ is an orthogonal basis for  $\ell^2(A)$ 
	if  the elements of  $\mathcal E(J)$ are mutually  orthogonal on $A$; that is 
	\begin{equation}\label{orthogonality} 
		\langle E_j, E_{j'}\rangle_{\ell^2(A)} = \sum_{a\in A} e^{2\pi i (j-j')\cdot a/N} = 0 \quad \forall j, j'\in J, \ j\neq j'.
	\end{equation}


	Notice that the orthogonality relation (\ref{orthogonality}) can also be expressed as 
	$$\hat\chi_A(j-j') \equiv   \sum_{a\in A} e^{2\pi i (j-j') \cdot a/N} = 
	0, \quad \forall j\neq j'$$
	where $\hat\chi_A$ is the discrete Fourier transform of the  indicator function $\chi_A$. 
	%
	
	
	\begin{proof}[Proof of Theorem \ref{example of spectral pair-general}]
		To prove mutual  orthogonality of the exponentials $\mathcal E(\Lambda)$ in $L^2(\Omega)$,  let $\lambda_1+j_1/N$ and $\lambda_2+j_2/N$ be two distinct vectors in $\Lambda$. We have the following: 
		\begin{align}\nonumber 
			\langle e_{\lambda_1+j_1/N}, e_{\lambda_2+j_2/N}\rangle_{L^2(\Omega)} &= \sum_{a\in A} \langle e_{\lambda_1+j_1/N}, e_{\lambda_2+j_2/N}\rangle_{L^2(\Omega_1+a)}\\\label{assumption1}
			&= \left(\sum_{a\in A} e_{\lambda_1+j_1/N}(a) e_{\lambda_2+j_2/N}(-a) \right) 
			\langle e_{\lambda_1+j_1/N}, e_{\lambda_2+j_2/N}\rangle_{L^2(\Omega_1)}\\\label{assumption}
			&= \left(\sum_{a\in A} e_{j_1/N}(a) e_{j_2/N}(-a) \right) 
			\langle e_{\lambda_1+j_1/N}, e_{\lambda_2+j_2/N}\rangle_{L^2(\Omega_1)}\\\label{last-line1}
			&= \langle E_{j_1}, E_{j_2}\rangle_{l^2(A)} \langle e_{\lambda_1+j_1/N}, e_{\lambda_2+j_2/N}\rangle_{L^2(\Omega_1)}.
		\end{align}
		
		To pass from (\ref{assumption1}) to (\ref{assumption}) we used the assumption that $\hat\delta_{\lambda}(a)=1$ for all $a\in A$ and $\lambda\in \Lambda_1$.
		If $j_1\neq j_2$, by the orthogonality of $\mathcal E(J)$, the first inner product in (\ref{last-line1}) is zero. 
		If $j_1= j_2$, then by the assumption we must have $\lambda_1\neq \lambda_2$,  and by the orthogonality of $\mathcal E(\Lambda_1)$ we get 
		$\langle e_{\lambda_1+j_1/N}, e_{\lambda_2+j_2/N}\rangle_{L^2(\Omega_1)} = \langle e_{\lambda_1}, e_{\lambda_2}\rangle_{L^2(\Omega_1)} =0.$ The completeness of $\mathcal E(\Lambda)$ in $L^2(\Omega)$ is obtained by a density condition (see \cite{Duffin_Shaeffer_52}), or directly by  Theorem \ref{examples of Frame spectral pairs} since $\mathcal E(\Lambda)$ is a frame for $L^2(\Omega)$. 
		
	\end{proof}

	\begin{remark} Note due to the symmetry property of spectral pairs in finite settings, by the assumption of Theorem \ref{example of spectral pair-general}, the domain 
		$\Omega=\Omega_1 + J$  admits an orthogonal basis of exponentials $\mathcal E(\Lambda)$ with  $\Lambda= \Lambda_1+A/N$, provided that (\ref{disjointness}) and (\ref{suff-cond}) are satisfied. 
	\end{remark}
	
	\begin{remark}\label{cor} Let $(\Omega_1, \Lambda_1)$ be a spectral pair. Assume that $A\subset \Bbb Z^d$ is finite such that (\ref{disjointness}) and (\ref{suff-cond}) are satisfied,  and $B\subset \Bbb R^d$ is  a finite set of rational numbers. Assume that $\sharp A=\sharp B=d$. Then there exists an integer number $N\geq 1$ such that $NB\subset \Bbb Z^d$. Consider the $d\times d$ matrix $V=(v_{a,b})_{a\in A, b\in B}$ with entries given  by 
		$$v_{a,b} = e^{-2\pi i a\cdot b} .$$
		If $V$ is a unitary (or an invertible) matrix, then $\mathcal E(\Lambda)$ is an orthogonal basis (or a Riesz basis) for $L^2(\Omega)$, where 
		\begin{align}\notag
			\Omega=\Omega_1+A, \ \Lambda=\Lambda_1+B .
		\end{align}
		
		This can be  readily obtained by Theorem  \ref{example of spectral pair-general} since $(A, NB)$ is a spectral pair in $\Bbb Z_N^d$. The proof of the  Riesz basis is  due to Theorem  \ref{examples of Riesz spectral pairs} when $V$ is an invertible matrix. 
		
	\end{remark} 
	
	\subsection{Examples of spectral pairs}\label{examples_spect_pairs}  
	The
	first example shows that the sufficient condition (\ref{suff-cond}) is also a {\it necessary condition} in Theorem \ref{example of spectral pair-general}.
	
	\begin{example} Let $\Omega_1=[0,2]$.  Take  $N\in 6\Bbb Z$, $N>0$,  and    $A=\{0,3\}$, $J=\{0,\alpha=N/6\}$. 
		Then $(A, J)$ is a spectral pair in $\Bbb Z_N^d$, while $(\Omega, \Lambda)$ fails to be a spectral pair in $\Bbb R^d$. Indeed, for   $\Lambda=2^{-1}\Bbb Z \cup 2^{-1}\Bbb Z+6^{-1}$, the exponentials $\mathcal E(\Lambda)$ are not mutually orthogonal on $\Omega=[0,2]\cup [3,5]$.  Otherwise we would have had 
		$\Lambda\subseteq \Lambda-\Lambda \subseteq Z_\Omega$, where 
		$$Z_\Omega=\{0\} \cup \{\xi\in \Bbb R: ~ \hat \chi_\Omega(\xi)=0\}.$$  On the other hand,  
		$Z_\Omega= 2^{-1}\Bbb Z \cup 3^{-1}\Bbb Z$, and $\Lambda$ is not contained in $Z_\Omega$. This  is a contradiction,  thus the functions in   $\mathcal E(\Lambda)$ are not orthogonal on $\Omega$. 
	\end{example}

	\begin{example}\label{example of spectral pair} Let $N\geq 1$, and assume that $(A,J)$ is a spectral pair in $\Bbb Z_N$. Take 
		$\Omega:= \cup_{a\in A} [a,a+1). $
		Then, by Theorem \ref{example of spectral pair-general}, the pair 
		$(\Omega, \Lambda)$ is spectral in $\Bbb R$ for 
		$\Lambda=\Bbb Z+J/N$. 
	\end{example}

	\begin{example}[$d=1$]\label{example:d=1} Let $N=4$, $A=\{0,2\}$ and $J=\{0,1\}$. Define the $2\times 2$ {\it evaluation matrix}
		$$H= \frac{1}{\sqrt{2}} (\omega^{a\cdot j})_{a\in A , j\in J} .$$ 
		Then, $H$ is a unitary matrix, that is, $H^*H=I$. This proves that 
		$(A, J)$ is a spectral pair in $\Bbb Z_4$. By Theorem \ref{example of spectral pair-general}, for $\Omega= [0,1] \cup [2,3]$ and $\Lambda= \Bbb Z \cup \Bbb Z+1/4$, the pair $(\Omega, \Lambda)$ is a spectral pair $\Bbb R$. This example is illustrated in Figure \ref{fig:1dspectralpair}. 
		\begin{figure}[t] 
			\centering
			\includegraphics[scale=1.2]{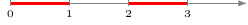} 
			\includegraphics[scale=1.2]{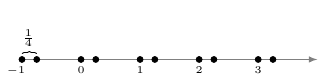}
			\caption{\footnotesize The figure illustrates $\Omega=[0,1]\cup [2,3]$ and its spectral set $\Lambda= \Bbb Z \cup \Bbb Z+1/4$ in $d=1$.} 
			\label{fig:1dspectralpair}
		\end{figure}
	\end{example} 
	By the symmetry property, the pair $(J,A)$ is also a spectral pair. Therefore, using the result of Theorem \ref{example of spectral pair-general} we obtain the well-known spectral pair $(\tilde\Omega=[0,2], \tilde\Lambda=2^{-1} \Bbb Z)$. 
	
	\begin{example}[$d=2$] 
		To construct a spectral pair in higher dimensions, one possible technique is using the Cartesian product as in the results of Jorgensen and Pedersen in \cite{JorgPed99}, Section 2. 
		See also \cite{JoPe92,JoPe94} for more on the construction and `closeness' of spectral pairs in Cartesian settings. An example of a spectral pair in dimension $d=2$ 
		is depicted in Figure \ref{fig:2dspectralpair2}. This example is constructed from the self Cartesian product of the spectral pair given  in Example \ref{example:d=1}. 
		{
			\begin{figure}[t!] 
				\centering
				\includegraphics[scale=1.2]{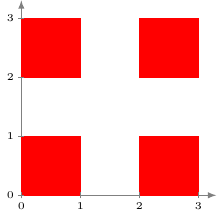} \hspace{1.3cm} 
				\includegraphics[scale=1.6]{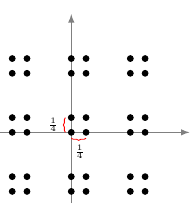} \caption{\footnotesize An example of a spectral set with its spectrum in $d=2$} 
				\label{fig:2dspectralpair2}
			\end{figure}
		}
	\end{example}

	\subsection{Spectral set and function recovery}\label{sampling_section}

	In this section, we remark on the link between spectral pairs and recovery problem, known as Shannon Sampling theorem. 
	First, we need the following general result for any pair in $\Bbb Z_N$. 
	
	\begin{lemma}\label{lemma:dist}
		Define $\Omega=  [0,1]+A$ and $\Lambda= \Bbb Z+J/N$ for any sets $A, J \subset \Bbb Z_{N}$, and suppose that $f$ is a function in the Paley-Wiener space $PW_{\Omega}= \{ f\in L^2(\Bbb R)\mid \hat{f}(\xi)=0 \text{ a.e. } \xi \not \in \Omega\}$.  Then, the distribution 
		\begin{align}\label{Fs-function1}
			F_s(t)=  \sum_{\lambda\in\Lambda} f(\lambda) \delta_{\lambda}(t), \ t\in \Bbb R
		\end{align}
		has a Fourier transform given by
		\begin{align}
			\hat F_s(\xi) =\sum_{k\in\Bbb Z} \hat \chi_J(k) \hat f(\xi-k)\quad a.e. \  \xi \in \Omega.
			\label{Fs-function1}
		\end{align}
		
	\end{lemma}
	
	\begin{proof} We express the distribution (general function)  $F_s$ as 
		\begin{align}\label{Fs-function2}
			F_s(t)= f(t) P(t) 
		\end{align}
		where 
		$P\in \mathscr{S}'(\Bbb R)$ is a distribution, known as a {\it  ($\Lambda$-sampling) pattern function}, given by 
		\begin{align}\notag
			P(t):= \sum_{\lambda\in\Lambda} \delta_\lambda(t) = \sum_{n\in \Bbb Z} \delta_n(t) \ast \sum_{j\in J} \delta_{j/N}(t), \ t\in \Bbb R. 
		\end{align} 
		
		An example of such pattern is illustrated in Figure \ref{fig:1dpattern}. 
		{\begin{figure}[t] 
				\centering
				\includegraphics[width=.5\textwidth]{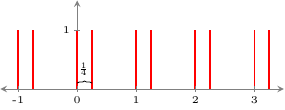} 
				\caption{\footnotesize The figure illustrates the sampling pattern $\Lambda=\Bbb Z\cup \Bbb Z+1/4$ in $d=1$.} 
				\label{fig:1dpattern}
		\end{figure} } 	
		By applying the Fourier transform to  (\ref{Fs-function2}) we obtain 
		\begin{align}\label{convolution_of_FT} 
			\hat F_s = \hat f \ast \hat P, 
		\end{align}
		where
		\begin{align}\label{FT_of_pattern}
			\hat P(\xi) =\mathcal F (\sum_{n\in \Bbb Z} \delta_n)(\xi) \mathcal F(\sum_{j\in J}\delta_{j/N})(\xi), \ \xi \in \Bbb R.
		\end{align}
		By applying the Poisson summation formula to the first term  of (\ref{FT_of_pattern}), we get  
		\begin{align}\label{*}
			\mathcal F (\sum_{n\in \Bbb Z} \delta_n)(\xi)= \sum_{n\in\Bbb Z} e^{-2\pi i n\xi}=\sum_{k\in \Bbb Z} \delta_k(\xi), \ \xi\in \Bbb R
		\end{align}
		For the second term in (\ref{FT_of_pattern}), in terms of distributions we have 
		\begin{align}\label{**}\mathcal F(\sum_{j\in J}\delta_{j/N})(\xi)= \sum_{j\in J} e^{-2\pi i j\xi/N} , \ \xi \in \Bbb R. 
		\end{align}
		Define the function\footnote{Note that the definition of this general function over $\Bbb R$, also called the {\it symbol} of $J$, coincides with the Fourier transform of the characteristic function $\chi_J$ over the cyclic group when the domain is restricted to $\Bbb Z_N$ up to a constant.} $\hat\chi_J(\xi):=\sum_{j\in J} e^{-2\pi i j\xi/N}$. By (\ref{*}) and plugging (\ref{**})
		in (\ref{FT_of_pattern}), we obtain 
		$$\hat P(\xi)= \hat\chi_J(\xi) \sum_{k\in \Bbb Z} \delta_k(\xi) = \sum_{k\in \Bbb Z} \hat\chi_J(k) \delta_k(\xi).$$
		By substituting this expression in (\ref{convolution_of_FT}) we obtain 
		\begin{subequations}\label{F_s}
			\begin{align}
				\hat F_s(\xi)&= \hat f \ast\hat P(\xi) \\
				&=\left( \hat f\ast \sum_{k\in\Bbb Z} \hat \chi_J(k) \delta_k \right)(\xi)\\
				&= \sum_{k\in\Bbb Z} \hat \chi_J(k) \left(\hat f \ast \delta_k\right) (\xi)\\\label{last_summation}
				&= \sum_{k\in\Bbb Z} \hat \chi_J(k) \hat f(\xi-k)\quad \forall \xi \in \Omega. 
			\end{align} 
		\end{subequations} 
		This completes the proof\footnote{By (\ref{last_summation}), 
			the Fourier transform of $F_s$ is equal to a sum of translated copies of the Fourier transform of $f$ on $k$-shifts of $\Omega$ multiplied with coefficients $\hat \chi_J(k)$. The theorem proves that the Fourier transform of $F_s$ over $\Omega$ is the exact Fourier transform of $f$ up to some constant, and the translations do not overlap. In the language of signal processing, this means that the {\it aliasing term} is zero.}.
	\end{proof}
	Note that Lemma \ref{lemma:dist} holds for any domain $\Omega\subset \Bbb R^d$, $d\geq 1$,  with finite and positive measure and any $\Lambda=\Lambda_1+J/N$, $J\subset \Bbb Z_N^d$, such that  the Poisson summation holds for $\Lambda_1$.

	We are now ready to prove Proposition \ref{proposition1}. 
	\begin{proof}[Proof of Proposition \ref{proposition1}]
		Let $F_s$ be as in (\ref{Fs-function1}).  
		Then 
		$$\hat F_s(\xi)= (\sharp J) \hat f(\xi),  \ a.e.\  \xi\in \Omega.$$
		Indeed, we need to show that all the terms in (\ref{last_summation}) are zero except for $k=0$. We do this by considering different cases for the values of $k$. 
		
		\begin{enumerate}
			\item[\textbf{I:}] If $k=0$, we have %
			$\hat \chi_J(k)\hat f(\xi-k) = \sharp J \hat f(\xi).$ 
			\item[\textbf{II:}] If $k\neq 0$, the summation is known as {\it aliasing term} and we consider two cases as well: 
			\begin{enumerate}
				\item[(a)] {$k\in A-A$, $k\neq 0$. } Then, 
				$k=a-a'$ for some distinct $a,a'\in A$. 
				By the symmetry 
				property of the spectral pairs in $\Bbb Z_N$ (and more generally in $\Bbb Z_N^d$), $A$ is a spectral set for the set $J$ and we obtain 
				$\hat\chi_J(k)=\hat \chi_J(a-a')=0.$ 
				\item[(b)] $k\in \Bbb Z\backslash \{A-A\}$, $k\neq 0$. 
				In this case, we claim that $|\Omega\cap \Omega+k|=0$. If not, then there must be $a, a'\in A$ such that 
				$|[a+k, a+k+1)\cap [a', a'+1)|\neq 0 .$
				Since $a\in \Bbb Z$ and each interval has length unit $1$, then we must have $k= a'-a$, which is a contradiction. Thus the sum over all $k\not\in A-A$ is equal to zero. 
			\end{enumerate}
		\end{enumerate}
		%
		In summary, we obtain 
		$\hat F_s(\xi)= (\sharp J) \hat f(\xi)$ for a.e. $\xi\in \Omega$,
		\begin{align}\label{FT_of_f1} 
			\hat f(\xi) = (\sharp J)^{-1} \hat F_s(\xi). 
		\end{align}

		%
		%

		Note that by the definition of $F_s$ in (\ref{Fs-function1}), we obtain 
		$$\hat F_s(\xi) = \int_\Bbb R \left( \sum_{\lambda\in\Lambda} f(\lambda) \delta_\lambda(t) \right) e^{-2\pi i t \xi} dt = \sum_{\lambda\in\Lambda} f(\lambda) e^{-2\pi i \lambda \xi} \quad a.e. \ \xi \in \Omega.$$
		Substituting this in (\ref{FT_of_f1}) we complete the proof: 
		\begin{align}\notag
			\hat f(\xi) = (\sharp J)^{-1} \sum_{\lambda\in\Lambda} f(\lambda) e^{-2\pi i \lambda \xi} \quad a.e. \ \xi\in \Omega.
		\end{align} 
		
	\end{proof}

	\nocite{*}

\end{document}